\newcommand*{\cs}[1]{\texttt{\textbackslash#1}}
\newcommand*{\cmd}[1]{\cs{\expandafter\@gobble\string#1}}
\newtheorem{definition}{Definition}
\newtheorem{thm}{Theorem}
\newtheorem{remark}{Remark}
\newtheorem{lemma}{Lemma}
\newtheorem{corollary}{Corollary}
\begin{document}

\title{On the maximal saddle order of $p:-q$ resonant saddle}

\author{Guangfeng Dong \\{\small Department of Mathematics, Jinan University,}\\{\small Guangzhou 510632, China,
donggf@jnu.edu.cn(Corresponding author)}\\ \\Changjian Liu \\ {\small School of Mathematics(Zhuhai), Sun Yat-Sen University,}\\{\small Zhuhai 519082, China,  liuchangj@mail.sysu.edu.cn}\\ \\
Jiazhong Yang \\ {\small School of Mathematical Sciences, Peking University, }\\{\small Beijing 100871, China, jyang@math.pku.edu.cn} }

\date{}

\maketitle

\begin{abstract}
In this paper, we obtain some estimations of the saddle order which is the sole topological invariant
of the non-integrable resonant saddles of planar polynomial vector fields of arbitrary degree $n$.
Firstly, we prove that, for any given resonance $p:-q$, $(p, q)=1$, and  sufficiently big integer $n$,
the maximal saddle order  can grow at least as rapidly as $n^2$.
Secondly, we show  that there exists an integer $k_0$, which grows at least as rapidly as $3n^2/2$,
 such that $L_{k_0}$ does not belong to the ideal
generated by the first $k_0-1$ saddle values $L_1, L_2, \cdots, L_{k_0-1}$,
where  $L_{k}$ means the $k$-th saddle value of the given system.
In particular,  if  $p=1$ (or $q=1$), we obtain a sharper result  that  $k_0$ can grow at least as rapidly as $2  n^2$.
\\
\\
\noindent \textbf{Keywords:}  polynomial systems; $p:-q$ resonance;  saddle value;  saddle order; generalized center
\end{abstract}





\section{Introduction and  main results}
\label{intro}

Consider real planar polynomial ordinary differential equations:
\begin{equation}\label{rcf}\dot x=-y+f_n(x,y), \quad  \dot y=x+g_n(x,y),\end{equation}
where $f_n(x,y)$ and $g_n(x,y)$  are polynomials of degree $n$ consisting of nonlinear terms only. It is well known that
such a system always has a center or a focus at the origin, and to obtain criteria to distinguish them
is one of the most classical problems in the qualitative theory of ordinary differential equations.

Let $z=x+y\mathrm i$. We can transform  the   above system to a  complex form   with $1:-1$ resonance saddle
\begin{equation}\label{ccf}
\dot z=\mathrm i z+P_n(z, \bar z), \quad \dot {\bar z}=-\mathrm i \bar z+\bar P_n(z, \bar z),\end{equation}
where $P_n$ is a polynomial of its variables.
Then system (\ref{rcf}) is a center if and only if system (\ref{ccf}) is integrable, i.e.,
it has a first integral of the form $H(z, \bar z)=z\bar z+h.o.t.$,
where ``$h.o.t.$'' stands for ``higher-order terms''.

In this paper,  we concentrate on  a more general version of this problem.
Consider the following $p:-q$ resonance saddle system
 \begin{eqnarray}\label{C-S}
\dot{x}=px+P(x,y), \qquad \dot{y}=-q y+Q(x,y),
\end{eqnarray}
where $p$ and $q$ are  positive integers such that $(p,q)=1$, and $P$ and  $Q$ are real or complex polynomials of degree
$n$ having no constant and linear terms.
Without loss of generality, we assume $p\leq q$.
The origin is said to be a {\it generalized center} if there exists an analytic first integral in a neighborhood of the origin.
The corresponding system is said to be integrable.

To decide whether the origin of system (\ref{C-S}) is a generalized center,
naturally one needs to seek for an analytic first integral of  the form
 $H=x^{q}y^{p}+h.o.t.$  and study its derivative along the vector field.
 Namely, one can calculate the successive terms in the Taylor expansion of
 $H$ and $\dot H$,
 $$H=x^{q}y^{p}+h.o.t. , \qquad \dot{H}=  \sum_{k=1}^{\infty}L_{k}(x^{q}y^{p})^{k+1}.$$
 Obviously  system (\ref{C-S}) is integrable if and only if all of the values $L_{k}$ vanish.
 Therefore these coefficients $L_{k}$ play a critical role in the generalized center problem.

The value $L_k$ is called the $k$-th saddle value of the system (an alternative  definition is given in Section \ref{Pre}) and $k$ plays a role of describing the
order of the saddle value $L_k$.
These $L_k$ are not unique in general,
but the order of the first nonzero value $L_k$,
which is called the saddle order,
is an invariant of the system under the change of the form $(x,y) \mapsto (x+h.o.t.,y+h.o.t.)$.
Furthermore, it is also the sole topological invariant of the singularity if $p$ and $q$ are given(see \cite{CS-Inv}).
Besides, the saddle order is closely related with other problems, for example,  the cyclicity of a saddle or a homoclinic loop (see, e.g \cite{JoyRou,Zo2}).

To get all the conditions of a generalized center,
usually we must calculate all these saddle values one by one.
There are some  algebraic mechanisms  to derive them, which can be found in,
e.g  \cite{RomanShafer}. Using this algorithm, it is easy to prove that every saddle value is a polynomial
of the coefficients of $P$ and $Q$.

Denote by $I_{k}$($I_{\infty}$, resp.) the ideal generated by the first $k$ saddle values (all the saddle values, resp.)
in the ring consisting of all polynomials of the coefficients of $P$ and $Q$ over the field  $\mathbb R$ or $\mathbb{C}$.
According to Hilbert's Basis Theorem, we know that $I_{\infty}$ is finitely generated. Namely, there exists a  minimal  number $M=M(p,q,n)$ such that
$L_{k}=0$ for  all $k >M$ provided that only  $L_{k}=0$ for $k \leq M$.
In other words,
$M(p,q,n)$ is exactly the maximal possible  saddle order of the systems. Notice that for real and complex systems,
$M(p,q,n)$ can be different, but our results stand  for both cases.

Hilbert's Basis Theorem, however, only guarantees  the existence of such an $M(p,q,n)$,
it says  nothing about how to determine this number.
In fact, for any given tuple $(p,q,n)$,
to obtain such  a number $M(p,q,n)$ is an extremely difficult problem. Up to now,  systematically solved  cases  include
only $M(1,1,2)=3$ (see e.g. \cite{Bau}) and $M(1,2,2)=5$ (see e.g. \cite{FSZ}). We  even do not know the corresponding number $M(1,1,3)$.

Beyond the above systematic results, there are some partially known cases.
If  $P$ and $Q$ in (\ref{C-S}) are homogenous cubic polynomials, then
$M_{h}(1,1,3)=5$, $M_h(1,3,3)=8$ (see e.g  \cite{Sib2,Liu-YLi-J,HuRomanShafer}), where  $M_h(p,q,n)$
 with a subscript $h$
 indicates the maximal possible  saddle order of (\ref{C-S}) with homogenous  nonlinearity
of degree $n$.

Since it seems too difficult to decide the exact $M(p,q,n)$ or $M_h(p,q,n)$ for general $p,q,n$, it is quite natural to
examine the complexity of the problem by
looking for the lower bounds of them.
The known results, besides the mentioned exact ones, can be summarized as follows:
\begin{eqnarray*}
&& M(1,1,3)\geq 12,\quad
M_{h}(1,1,4)\geq 18,\quad  M_{h}(1,1,5)\geq 22,\\
&&M(1,3,2)\geq 6,\quad  M_{h}(1,2,3)\geq 8,\ \\
&&M_h(1,1,n)\geq
\begin{cases}
  n^2-1,\quad\ \   n \   \mbox{is even}\\
 \frac{1}{2}(n^2-1),\  n \  \mbox{is odd}
\end{cases}, \\
&&M_h(1,2,n)\geq n^2 -1, \   \mbox{one of}\  n+2\  \mbox{and}\  2n+1\  \mbox{is  prime}.
\end{eqnarray*}
These results and more can be found in \cite{Sadovskii,YuTian,huang,FerChenRoman,M132,QiuYang,bai,dong,G-L,G-V}.

In this paper, we shall study the polynomial systems
(\ref{C-S}) with any given resonant $p:-q$ saddle and arbitrary degree $n$.
Since we are more interested in the  tendency  of the saddle order   as  $n$ tends to infinity,         therefore
in this paper,
we assume that the number $n$, the degree of the system,  is sufficiently big, e.g., $n\gg p+q$.
As far as we know, these theorems obtained in this paper are the
first ones to consider the very general tuple $(p,q,n)$.

The first theorem of the paper deals with the case where
  $P$ and $Q$ are homogenous polynomials. We have the following

\begin{thm}\label{TM1}
For any given resonance $p:-q$ and  sufficiently big $n$,
the following inequality holds
$$M_h(p,q,n)\geq \frac{n^2-1}{d},$$
where  $d=(n-1, p+q)$.
\end{thm}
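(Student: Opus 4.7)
I combine a grading argument, which identifies exactly which saddle values can possibly be nonzero for a homogeneous system of degree $n$, with an existence argument producing a system whose first nonzero saddle value has index $(n^2-1)/d$.

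First I would prove a grading lemma. Write the vector field as $X=X_0+X_n$ with $X_0=px\,\partial_x-qy\,\partial_y$ acting diagonally on monomials via $X_0(x^ay^b)=(pa-qb)x^ay^b$, and $X_n=P\,\partial_x+Q\,\partial_y$ raising total degree by $n-1$. Seeking $H=x^qy^p+\mathrm{h.o.t.}$ satisfying $XH=\sum_kL_k(x^qy^p)^{k+1}$ order by order in the total-degree grading, one checks inductively that $H$ can be taken with homogeneous components supported only in degrees $p+q+j(n-1)$, $j\ge 0$. Since $(x^qy^p)^{k+1}$ has degree $(p+q)(k+1)$, a nonzero contribution to $L_k$ forces $(p+q)k=j(n-1)$ for some $j\ge 1$. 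Writing $p+q=da$ and $n-1=db$ with $\gcd(a,b)=1$, and setting $m=(n-1)/d=b$, this gives $j=at$ and $k=tm$ for some integer $t\ge 1$. Hence $L_k=0$ unless $m\mid k$, and the target index $(n^2-1)/d=m(n+1)$ is precisely the $(n+1)$-th admissible slot.

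Next, it suffices to exhibit a single homogeneous system for which $L_m=L_{2m}=\cdots=L_{nm}=0$ but $L_{(n+1)m}\ne 0$, because this immediately yields $M_h(p,q,n)\ge(n+1)m=(n^2-1)/d$. The parameter space of pairs $(P_n,Q_n)$ has dimension $2(n+1)$, and the map
\[\Phi\colon (P_n,Q_n)\longmapsto\bigl(L_m,L_{2m},\ldots,L_{(n+1)m}\bigr)\]
is polynomial. I would show that $\Phi$ attains a point of the form $(0,\ldots,0,\ast)$ with $\ast\ne 0$, by starting from a Darboux-integrable subfamily (on which $\Phi\equiv 0$) and perturbing along carefully chosen directions. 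A concrete approach is to work with a monomial ansatz $P_n=\sum_i a_i x^i y^{n-i}$, $Q_n=\sum_j b_j x^j y^{n-j}$, and to prove that the leading parts of $L_m,\ldots,L_{(n+1)m}$, regarded as polynomials in the $a_i,b_j$, span an $(n+1)$-dimensional space. A Vandermonde-type identity, or a Melnikov/Abelian-integral evaluation along the unperturbed level curves $x^q y^p=\mathrm{const}$, is the natural tool here.

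The main obstacle is precisely this independence step. The grading lemma is purely combinatorial, but certifying that $L_{(n+1)m}$ does not lie in the ideal generated by $L_m,\ldots,L_{nm}$ demands either an explicit formula for the leading part of each admissible saddle value or a careful parameter count coupled with transversality of a well-chosen perturbation family. Once one realizing system is produced, the stated inequality $M_h(p,q,n)\ge(n^2-1)/d$ follows immediately.
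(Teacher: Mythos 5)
Your overall strategy coincides with the paper's: a grading lemma showing $L_k=0$ unless $n_1=(n-1)/d$ divides $k$ (the paper's Lemma \ref{Homo}, proved via normal forms rather than via the first integral, but the combinatorics are identical), followed by a perturbation from an integrable system in which the first-order terms of the admissible saddle values $L_{n_1},\dots,L_{(n+1)n_1}$ are shown to be linearly independent, so that one can solve for $(0,\dots,0,\ast)$. Your identification of the first-order terms as coefficients extracted along the level curves $x^qy^p=\mathrm{const}$ is exactly what the paper's Corollary \ref{lineorder} formalizes: for the family $\dot x=px(1-U)+\varepsilon pP$, $\dot y=-qy(1-U)+\varepsilon qQ$, the linear part of $L_k$ in $\varepsilon$ is the coefficient of $(x^qy^p)^{k+1}$ in $(x^{q-1}y^pP+x^qy^{p-1}Q)\sum_iU^i$.

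However, there is a genuine gap, and you have located it yourself: the independence step is asserted, not proved. Saying that a Vandermonde identity or a transversality argument is ``the natural tool'' does not produce the $(n+1)$ independent linear forms; a dimension count on the $2(n+1)$-parameter family of homogeneous pairs $(P_n,Q_n)$ cannot by itself rule out that all admissible saddle values beyond some index lie in the ideal of the earlier ones. This is where essentially all of the work in the paper resides: one must make a very specific choice, namely $P=\xi_{n+2}y^n$, $Q=\sum_{j=1}^{n+1}\xi_jx^{n-j+1}y^{j-1}$ and $U=x^{p'}(x^{N_1}+y^{N_1})+\mu g$ with $p'$ a divisor of $p$ chosen so that $(n-1-p',pp')=1$, and then prove (the paper's Lemma \ref{TMn0matrix}) that the resulting $(n+1)\times(n+2)$ matrix $\mathbf{A}$ of first-order coefficients has rank $n+1$. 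That proof requires an explicit block decomposition of $\mathbf{A}$, a number-theoretic analysis of which binomial coefficients appear where (the congruences $d(j_m-2)\equiv mpp' \bmod N_1$), a tailored choice of the correction term $g$, and a separate case analysis for small values of $p$, $p'$, $d$. Without this construction, or some equally concrete substitute, the claimed realization of a system with saddle order exactly $(n^2-1)/d$ is not established, so the proposal as written does not yet constitute a proof.
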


\begin{remark}
Notice that the number  $d$ plays a very important role in the integrability of the system.
Roughly speaking, the smaller  $d$ is, the more difficult  the problem  becomes.
 For example,  consider the following Lotka-Volterra system
\begin{equation*}\label{2}
\dot x=x(1-a_{0}x^2-a_{1}xy-a_{2}y^2), \quad \dot y=y(-q+b_{0}x^2+b_{1}xy+b_{2}y^2). \end{equation*}
We have $p=1$, $n=3$. If $q$ is odd, then $d=2$, the problem is comparatively simple.
In fact, in \cite{LiuLi-Yun}, it is proved that
this system is integrable if and only if its first three saddle values are zero.
If $q$ is even, it follows that $d=1$,  then  the situation turns out to be  very unpleasant.
The integrability of this system is not entirely solved even for $q=2$ (see e.g. \cite{CGRS}).
\end{remark}

The next theorem deals with a more general setting, i.e.,  we do not restrict system (\ref{C-S}) to homogenous nonlinearities. In this case, we prove that the maximal saddle order $M(p,q,n)$ can always grow
at least as rapidly as $n^2$ even when $d>1$.

\begin{thm}\label{TM2}
For any given  saddle  admitting  $p:-q$ resonance and for any  sufficiently big $n$,
the following inequality holds
$$M(p,q,n)\geq n^2-2rn+r^2-1,$$
where $0\leq r\leq p+q-1$ satisfying
$r\equiv n \, \,\mbox{mod}\  (p+q)$.
\end{thm}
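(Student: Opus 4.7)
The plan is to deduce Theorem~\ref{TM2} as an immediate corollary of Theorem~\ref{TM1} by dropping the working degree from $n$ down to the nearest multiple of $p+q$ below it, thereby forcing the arithmetic factor $d$ in Theorem~\ref{TM1} to collapse to $1$ regardless of the residue class of $n$ modulo $p+q$.

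Concretely, I would set $m := n - r$. By the defining congruence $r \equiv n \pmod{p+q}$ with $0 \le r \le p+q-1$, the integer $m$ is a positive multiple of $p+q$. Since $n$ is sufficiently big and the shift $r$ is bounded by $p+q-1$ independently of $n$, the truncated degree $m$ is still sufficiently big for Theorem~\ref{TM1} to apply. Moreover, $m - 1 \equiv -1 \pmod{p+q}$, whence
$$d' \;:=\; \gcd(m-1,\,p+q) \;=\; 1.$$

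Applying Theorem~\ref{TM1} at degree $m$ then yields
$$M_h(p,q,m) \;\geq\; \frac{m^2-1}{d'} \;=\; m^2-1 \;=\; (n-r)^2-1 \;=\; n^2-2rn+r^2-1.$$
To transfer this bound from $M_h(p,q,m)$ to $M(p,q,n)$, I would view each homogeneous nonlinearity of degree $m$ as a (non-homogeneous) nonlinearity of degree at most $n$ by setting the coefficients of all monomials of degrees strictly greater than $m$ equal to zero. This embedding preserves the saddle values of the associated system and hence the saddle order, so that $M(p,q,n) \geq M_h(p,q,m) \geq n^2 - 2rn + r^2 - 1$, which is the desired inequality.

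I do not anticipate any serious obstacle in this reduction; the substantive work has already been carried out in the proof of Theorem~\ref{TM1}. The only piece of insight required is to notice that, whatever residue $n \bmod (p+q)$ may take, one can always truncate down to a nearby multiple of $p+q$ as a workable degree, and that the resulting lower bound $(n-r)^2 - 1$ quantifies precisely the cost of this truncation.
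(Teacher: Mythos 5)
Your reduction is the same as the paper's in its key step: both arguments truncate the degree from $n$ down to $n'=n-r$, observe that $n'-1\equiv -1 \pmod{p+q}$ forces the gcd $d$ to equal $1$, and invoke Theorem~\ref{TM1} at degree $n'$ to obtain a homogeneous system of saddle order $n'^2-1=n^2-2rn+r^2-1$. The two proofs diverge only in how that system is converted into a witness for $M(p,q,n)$. You embed it into the degree-$n$ family by zero-padding the coefficients of degrees between $n'+1$ and $n$; this is legitimate under the reading that $M(p,q,n)$ is the maximum over systems of degree \emph{at most} $n$ (which is consistent with the paper's definition of $M$ via the ideal of saddle values in the full coefficient ring), since the saddle values of a given system do not depend on which parameter family it is viewed in, so $M(p,q,n)\ge M_h(p,q,n')$. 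The paper instead multiplies the whole vector field by the unit $1+x^r$, producing a system of degree exactly $n$ that is orbitally equivalent to the truncated one and hence has the same saddle order. That extra device buys a slightly stronger conclusion --- the bound is realized by a system whose nonlinearity genuinely has degree $n$, not merely $n-r$ --- and it insulates the argument against the stricter reading of ``degree $n$'' as ``degree exactly $n$''; if that stricter reading is intended, your zero-padding step is the one place where you would need the $(1+x^r)$ trick (or an equivalent one) to close the gap. Otherwise your proof is complete and correct.
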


 Due to  Hilbert's Basis Theorem, the ideal $I_{\infty}$ generated by all the saddle value $L_k$ is
 finitely generated. Denote by $M^{I}(p,q,n)$ the finite minimal  number  such that $I_{M^{I}}=I_{\infty}$,
i.e., $M^{I}(p,q,n)$ is exactly the maximal possible order of $L_{k}$ satisfying
$L_{k}\not \in I_{k-1}$.
Correspondingly, denote by $M^{I}_{h}(p,q,n)$  the value of $M^{I}(p,q,n)$ restricted to systems (\ref{C-S}) where $P$ and $Q$
are homogeneous polynomials of degree $n$.
Then in the following theorems, we  present some estimations of $M^{I}(p,q,n)$ and $M^{I}_{h}(p,q,n)$.

\begin{thm}\label{TMideal}
For system (\ref{C-S}) with any given resonance $p:-q$ and homogenous $P$ and $Q$ of degree $n$,         $n \gg 1$,
$$M_{h}^{I}(p,q,n)\geq \frac{3}{2d} n^{2}+O(n).$$
In particular, if  $p=1$, then
$$M_{h}^{I}(1,q,n)\geq \frac{2 }{d} n^{2}+O(n).$$
\end{thm}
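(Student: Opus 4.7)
The plan is to exhibit an integer $k_0 \geq \frac{3}{2d}n^2 + O(n)$ (respectively $k_0 \geq \frac{2}{d}n^2+O(n)$ when $p=1$) together with an irreducible algebraic family $\mathcal{F}$ of admissible systems (\ref{C-S}) on which $L_1 \equiv L_2 \equiv \cdots \equiv L_{k_0-1} \equiv 0$ while $L_{k_0} \not\equiv 0$. Any element of the ideal $I_{k_0-1}$ is a polynomial combination $\sum_{j<k_0} h_j L_j$, so it vanishes identically on $\mathcal{F}$, whereas $L_{k_0}$ does not. This Bautin-style criterion yields $L_{k_0} \notin I_{k_0-1}$ and hence $M_h^I(p,q,n) \geq k_0$.

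First I would normalize by passing to the effective saddle values. As in the recursive construction underlying Theorem \ref{TM1}, writing $\dot H = \sum L_k(x^q y^p)^{k+1}$ with $H = x^q y^p + \sum_{j\geq 1} H^{(j)}$ produces $H^{(j)}$ homogeneous of degree $p+q+j(n-1)$, while the resonant monomial $(x^q y^p)^{k+1}$ has degree $(k+1)(p+q)$; these match only when $k \equiv 0 \pmod{m}$ with $m := (n-1)/d$. Hence only every $m$-th saddle value is potentially nonzero, and controlling the first $k_0-1$ of them reduces to controlling roughly $k_0/m$ effective values, i.e.\ about $3n/2$ in general and about $2n$ when $p=1$, in contrast to the $\approx n$ effective values killed by Theorem \ref{TM1}.

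Second, I would construct $\mathcal{F}$. The family used in the proof of Theorem \ref{TM1} already forces about $n$ effective saddle values to vanish; the task is to enlarge the vanishing list by another $\sim n/2$ (resp.\ $\sim n$ when $p=1$) without collapsing $\mathcal{F}$ to a point. The natural device is to impose a Darboux-type invariance: for instance that the homogeneous system admits an invariant monomial axis such as $\{y=0\}$ (i.e.\ $y \mid Q_n$), or more generally an invariant curve of the form $x^a y^b = c$. Each such invariance forces an arithmetic progression of effective saddle values to vanish automatically, and a density count shows that a judicious choice contributes a proportion $\tfrac{1}{2}$ of the remaining effective indices in general, and a proportion $1$ in the $p=1$ case where the Lotka--Volterra-type factorization $P_n = x\tilde P_{n-1}$ (which makes $\{x=0\}$ invariant) yields twice the saving. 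The family $\mathcal{F}$ is then the subvariety of the Theorem \ref{TM1} family satisfying this additional invariance.

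The main obstacle is to verify non-vanishing of $L_{k_0}$ along $\mathcal{F}$. I would expand $L_{k_0}$ as a polynomial in the remaining free parameters by tracking the combinatorial paths through which the resonant monomial $(x^q y^p)^{k_0+1}$ is assembled by iterated action of the cohomological operator $D_0 = px\partial_x - qy\partial_y$ on the allowed monomials of $P_n$ and $Q_n$. On $\mathcal{F}$ the imposed invariance kills most such paths, and the surviving sum should reduce to an explicit expression, ideally a single monomial whose coefficient is a product of nonzero rationals. This combinatorial identity is the technically heaviest step; the $O(n)$ slack in the bound absorbs losses from the divisibility matching $k \equiv 0 \pmod{m}$ and from trimming a few boundary parameters. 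Once the non-vanishing is verified, the Bautin-style criterion yields the claimed lower bounds $M_h^I(p,q,n) \geq \frac{3}{2d}n^2 + O(n)$ and, via the sharper construction for $p=1$, $M_h^I(1,q,n) \geq \frac{2}{d}n^2 + O(n)$.
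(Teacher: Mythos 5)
There is a genuine gap, and it lies at the very first step of your plan. Your Bautin-style criterion (find a family $\mathcal{F}$ on which $L_1\equiv\cdots\equiv L_{k_0-1}\equiv 0$ but $L_{k_0}\not\equiv 0$) is logically sound, but it proves the \emph{stronger} statement $M_h(p,q,n)\geq k_0$, i.e.\ the existence of a system of actual saddle order $\tfrac{3}{2d}n^2+O(n)$. That is strictly more than Theorem \ref{TMideal} claims, and it would supersede Theorem \ref{TM1}, whose bound is only $\tfrac{1}{d}(n^2-1)$. The whole point of introducing $M^I$ alongside $M$ is that one can certify $L_{k_0}\notin I_{k_0-1}$ \emph{without} making $L_1,\dots,L_{k_0-1}$ vanish; your route forfeits exactly that advantage. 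Concretely, as you yourself compute, you would need about $\tfrac{3}{2}n$ (resp.\ $2n$) effective saddle values to vanish identically on a positive-dimensional family, whereas the construction behind Theorem \ref{TM1} (an $(n+1)\times(n+2)$ full-rank linear system in the parameters $\xi_j$) can only annihilate about $n+1$ of them. Your proposed mechanism for bridging the gap --- imposing a Darboux-type invariant curve and asserting by a ``density count'' that this kills a proportion $\tfrac12$ (resp.\ $1$) of the remaining effective indices --- is not substantiated and is the crux of the matter: invariant algebraic curves typically either force full integrability or impose no such arithmetic-progression vanishing, and no construction with the required non-vanishing of $L_{k_0}$ is exhibited.

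The paper's proof takes an entirely different route that never requires the earlier saddle values to vanish. It perturbs the integrable system $\dot x=px(1-U)$, $\dot y=-qy(1-U)$ by $\varepsilon P$, $\varepsilon Q$ with $U=x^{p'}(x^{N_1}+y^{N_1})$, $N_1=n-1-p'$, and uses Corollary \ref{lineorder}: each saddle value has the form $L_{mn_1}=L_{mn_1}(1)\varepsilon+o(\varepsilon)$, where $L_{mn_1}(1)$ is a linear form in the $\xi_j$ involving at most \emph{two} coordinates $\xi_{j_m},\xi_{N_1+j_m}$ with explicit binomial coefficients $C_{i_m}^{l_m}, C_{i_m}^{l_m-1}$. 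A hypothetical relation $L_{(N_1+N_2)n_1}=\sum_{m<N_1+N_2}F_mL_{mn_1}$ is then compared at order $\varepsilon$ and restricted to the two-dimensional coordinate subspace $\{\xi_{j_{N_2}},\xi_{N_1+j_{N_2}}\}$, where it forces two linear forms with \emph{different} ratios $C_i^l/C_i^{l-1}=(i-l+1)/l$ to be proportional --- a contradiction settled by an elementary congruence mod $N_1$. If you want to salvage your write-up, you should abandon the vanishing-family construction and instead work with the $\varepsilon$-linearized saddle values, where ideal non-membership becomes a concrete statement about linear forms.
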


Parallel to  Theorem \ref{TM2}, if we do not restrict system (\ref{C-S}) to homogenous nonlinearities,
then  we have

\begin{thm}\label{TMideal2}
For system (\ref{C-S}) with nonhomogeneous $P$ and $Q$, $n \gg 1$,
$$M^{I}(p,q,n)\geq \frac{3}{2} n^{2}+O(n).$$
In particular, if  $p=1$,    then
$$M^{I}(1,q,n)\geq 2 n^{2}+O(n).$$
\end{thm}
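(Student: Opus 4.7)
The plan is to reduce Theorem \ref{TMideal2} to the already-stated homogeneous result Theorem \ref{TMideal}, in exact analogy with how Theorem \ref{TM2} is obtained from Theorem \ref{TM1}. The key observation is that in the nonhomogeneous setting we may place, as a coordinate subspace inside the parameter space of degree-$n$ systems, a homogeneous subsystem of a smaller auxiliary degree $m<n$ chosen so that the coprimality parameter $d=(m-1,p+q)$ collapses to $1$.

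First, pick $r \in \{0, 1, \ldots, p+q-1\}$ with $r \equiv n \pmod{p+q}$ and set $m := n - r$. Then $m \equiv 0 \pmod{p+q}$, hence $m - 1 \equiv -1 \pmod{p+q}$, so $(m-1, p+q) = 1$. Applying Theorem \ref{TMideal} to the degree-$m$ homogeneous version of (\ref{C-S}) with $d = 1$ yields $M_h^I(p, q, m) \geq \tfrac{3}{2} m^2 + O(m)$, and in the $p=1$ case the sharper bound $M_h^I(1, q, m) \geq 2 m^2 + O(m)$. Since $m = n - r$ with $r$ bounded by a constant depending only on $p+q$, both right-hand sides expand to $\tfrac{3}{2} n^2 + O(n)$ and $2 n^2 + O(n)$, respectively, matching the bounds announced in Theorem \ref{TMideal2}.

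Next, I embed the homogeneous degree-$m$ parameter space into the degree-$n$ parameter space by setting to zero all coefficients of monomials of degree $\neq m$ in $P$ and $Q$. Let $R_n$ denote the coordinate ring of all coefficients and let $\pi : R_n \to R_n$ be the ring homomorphism that zeros out every coefficient of degree $\neq m$. Since each saddle value $L_k$ depends polynomially on the coefficients, and the algorithm computing $L_k$ uses only the defining equations of the system, $\pi(L_k^{(n)})$ equals the $k$-th saddle value $L_k^{(m,h)}$ of the homogeneous degree-$m$ subsystem. Any ideal relation $L_N^{(n)} = \sum_{i<N} f_i L_i^{(n)}$ in $R_n$ therefore descends under $\pi$ to $L_N^{(m,h)} = \sum_{i<N} \pi(f_i)\, L_i^{(m,h)}$, placing $L_N^{(m,h)}$ in $I_{N-1}^{(m,h)}$. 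Contrapositively, any witness $N$ with $L_N^{(m,h)} \notin I_{N-1}^{(m,h)}$ produces a witness $L_N^{(n)} \notin I_{N-1}^{(n)}$, which gives $M^I(p,q,n) \geq M_h^I(p,q,m)$ and hence the claimed bounds.

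The main difficulty is conceptual rather than computational: one must justify the specialization step carefully, and if the paper's convention requires the nonlinear part of (\ref{C-S}) to have strict degree $n$, one must also verify that the auxiliary homogeneous subsystem of smaller degree can be realized either directly within the class considered or as a limit of bona fide degree-$n$ systems without disturbing the nonmembership statement. Both points follow from the polynomial dependence of saddle values on coefficients together with the standard functoriality of ideal extension under ring homomorphisms, and no new combinatorial input beyond Theorem \ref{TMideal} is required.
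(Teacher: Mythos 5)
Your proposal is correct, and the paper itself gives no details here: its proof of Theorem \ref{TMideal2} is a one-line reference to ``the same pattern as Theorem \ref{TM2}.'' Both you and the paper reduce to Theorem \ref{TMideal} at the auxiliary degree $n'=n-r$ with $r\equiv n \bmod (p+q)$, where $(n'-1,p+q)=1$ forces $d=1$ and yields the clean constants $\tfrac32$ and $2$; the difference is the embedding mechanism. The paper's pattern (from Theorem \ref{TM2}) multiplies the degree-$n'$ homogeneous system by the unit factor $(1+x^r)$, which produces a family of degree exactly $n$ whose saddle values coincide with those of the original (time reparametrization; this is precisely what the passage from (\ref{pertsys}) to (\ref{pertsys2}) formalizes), and then relies implicitly on the fact that an ideal relation in the full coefficient ring specializes to the subfamily. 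You instead take the subfamily to be the coordinate subspace of homogeneous degree-$n'$ systems inside the degree-$n$ parameter space and make the specialization step explicit: since each $L_k$ is a universal polynomial in the coefficients, the evaluation homomorphism killing the coefficients of degree $\neq n'$ sends $L_k^{(n)}$ to $L_k^{(n',h)}$, so any relation $L_N=\sum_{i<N}f_iL_i$ descends, and nonmembership lifts contrapositively. Your version is cleaner on the ideal-theoretic side (the functoriality is stated rather than assumed) and suffices because $M^I$ lives in the polynomial ring of all coefficients of degree at most $n$, where the ``strict degree $n$'' locus is Zariski-dense and irrelevant to ideal membership; the paper's $(1+x^r)$ device is the natural patch if one insists on exhibiting witnesses of degree exactly $n$, and you correctly flagged that this is the only point needing care.
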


Obviously $M(p,q,n)\leq M^{I}(p,q,n)$.  However, the problem
 whether and/or when they  coincide with each other is quite open.

The paper is organized as follows:  In Section 2, we intend to provide some preliminaries
 such as definitions, notation and some lemmas.
 Then in Section 3 we shall present  detailed proof of all the theorems.

\section{Preliminaries}
\label{Pre}

First of all, we  recall some basic facts from normal form theory
of vector fields. For details, we recommend the readers \cite{Bibikov}.

It is well known that for system (\ref{C-S})
 there always exists a formal change of
coordinates
\begin{eqnarray}
\label{change} X=x+\sum_{k\geq2}\Phi_{k}(x,y),\quad
Y=y+\sum_{k\geq2}\Psi_{k}(x,y),
\end{eqnarray}
where $\Phi_{k}(x,y)=\sum_{i+j=k}\varphi_{i,j}x^{i}y^{j}$ and
$\Psi_{k}(x,y)=\sum_{i+j=k}\psi_{i,j}x^{i}y^{j}$ are homogeneous
polynomials of degree $k$,
 transforming system (\ref{C-S}) to its formal normal form
\begin{eqnarray*} \label{formal-nf}
\dot{X}=pX (1+\sum_{k\geq1}\pi_{1,k}(X^{q}Y^{p})^{k}),\quad
\dot{Y}=-q Y(1+\sum_{k\geq1}\pi_{2,k}(X^{q}Y^{p})^{k}).
\end{eqnarray*}

If we denote by $\pi_k=\pi_{1,k}-\pi_{2,k}$, then the
minimal number $k$  such that   $\pi_k \neq 0$  is an invariant of the system under the change of the form (\ref{change}).

\begin{definition} \label{def-quan}
The quantity $\pi_k$ is  called the $k$-th
saddle value of        system (\ref{C-S});
the saddle  order of system
(\ref{C-S}) is defined to be the  minimal number $k$ such that
 $\pi_k \neq 0$.
\end{definition}

From \cite{Bibikov},  we know that
 $\pi_k$ is  a polynomial of the coefficients of
$P$ and $Q$.    Notice that
although the definition and form of $\pi_k$ are not entirely same as $L_k$ defined in Section 1,
the saddle orders defined in these two ways   are
the same. Moreover, the ideal generated by the first $k$ values of $\{\pi_k\}$ also coincides with $I_{k}$.
Therefore, in these senses, $\pi_k$ and $L_k$ are equivalent to each other.
From now on, we shall denote $\pi_k$ as  $L_k$.

We consider the case of homogeneous nonlinearity    first.
When the nonlinear part of system (\ref{C-S}) only consists of homogeneous polynomials of degree $n$,
then according to normal form theory, the saddle values of
system (\ref{C-S})  satisfy the following properties.

\begin{lemma}\label{Homo}
If $P$ and $Q$ in system (\ref{C-S}) are homogeneous polynomials of degree $n$, then the saddle values $L_k$ satisfy that
$$L_k=0, \qquad n_1 \nmid k, $$
where   $n_1=(n-1)/d$
and $d=(n-1, p+q)$.
\end{lemma}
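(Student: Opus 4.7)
The plan is to prove the divisibility $n_1 \mid k$ via a scaling symmetry that is available precisely because the nonlinearity is homogeneous of degree $n$. First, I would introduce the diagonal rescaling $X=\lambda x$, $Y=\lambda y$ (with no time rescaling). This preserves the linear part $(pX,-qY)$ and, because $P,Q$ are homogeneous of degree $n$, transforms the system into
\begin{eqnarray*}
\dot X = pX + \lambda^{1-n} P(X,Y), \qquad \dot Y = -qY + \lambda^{1-n} Q(X,Y),
\end{eqnarray*}
so every coefficient of $P$ and $Q$ is scaled by the uniform factor $\lambda^{1-n}$. Denote the original system by $S$ and the rescaled one by $S'$.

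Next, I would track how the normal form transforms under this rescaling. Exploiting the fact that if $u=x+\cdots$, $v=y+\cdots$ normalize $S$, then $U=\lambda u$, $V=\lambda v$ normalize $S'$, I would substitute $u^{q}v^{p} = \lambda^{-(p+q)}U^{q}V^{p}$ into the normal form of $S$ to read off that the $k$-th normal form coefficients $\pi_{i,k}$ of $S'$ equal $\lambda^{-k(p+q)}$ times those of $S$. This yields $L_k(S') = \lambda^{-k(p+q)} L_k(S)$, which, since $S'$ has coefficients $\lambda^{1-n}(a_{ij},b_{ij})$, is equivalent to the quasi-homogeneity relation
\begin{eqnarray*}
L_k(\lambda^{1-n} a,\, \lambda^{1-n} b) = \lambda^{-k(p+q)} L_k(a,b)
\end{eqnarray*}
for $L_k$ viewed as a polynomial in the coefficients of $P$ and $Q$.

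Finally, setting $\mu = \lambda^{1-n}$ turns the identity into $L_k(\mu a, \mu b) = \mu^{k(p+q)/(n-1)} L_k(a,b)$. Since $L_k$ is a polynomial in $(a_{ij},b_{ij})$, if $L_k \not\equiv 0$ then the exponent $k(p+q)/(n-1)$ must be a non-negative integer, i.e.\ $(n-1)\mid k(p+q)$. Writing $n-1=d\,n_1$ and $p+q=d\,m$ with $\gcd(n_1,m)=1$, this is equivalent to $n_1 \mid k$, which is the desired conclusion. The main delicate point is justifying the transformation rule for the normal form coefficients, which follows from the equivariance of the normalization procedure under linear diagonal scalings of the phase variables; once this is in place, the rest is a routine arithmetic observation.
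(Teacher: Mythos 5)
Your proof is correct, but it takes a genuinely different route from the paper's. The paper argues combinatorially inside the normal form reduction: because the nonlinearity is homogeneous of degree $n$, every step of the normalizing change of coordinates and every intermediate system involves only homogeneous terms of degrees $1+i(n-1)$, while a resonant term $X(X^qY^p)^k$ has degree $1+k(p+q)$; so $L_k\neq 0$ forces $k(p+q)=i(n-1)$, whence $n_1\mid k$ after dividing by $d$. You instead extract a quasi-homogeneity identity $L_k(\mu a,\mu b)=\mu^{k(p+q)/(n-1)}L_k(a,b)$ from the equivariance of normalization under the diagonal scaling $(x,y)\mapsto(\lambda x,\lambda y)$, and conclude that a not-identically-zero polynomial can only be homogeneous of integer degree. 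The two arguments rest on the same hypothesis and give the same divisibility; yours has the added virtue of reproving, in the homogeneous case, exactly the weighted-homogeneity of the saddle values (Lemma 2.3 of Ilyashenko--Pyartli) that the paper invokes separately in its Lemma 3, so it unifies the two facts. The one point you should make explicit is the equivariance step: you need the $L_k$ to be computed by a graded, degree-by-degree normalization algorithm, so that transporting the normalizing map of $S$ by the scaling ($U=\lambda u$, $V=\lambda v$) yields the same polynomial $L_k$ evaluated at the scaled coefficients; this holds because the diagonal scaling preserves each monomial subspace, and (for the real case with $n-1$ even) the identity in $\mu$ obtained on the ray $\mu=\lambda^{1-n}>0$ still extends to a polynomial identity. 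With that noted, the arithmetic conclusion $(n-1)\mid k(p+q)\Leftrightarrow n_1\mid k$ is exactly as in the paper.
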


\begin{proof} It is easy to check that  the normal form change of
coordinates has the form
$$ X=x+\sum_{i\geq1}\Phi_{1+i(n-1)}(x,y),\quad
Y=y+\sum_{i\geq1}\Psi_{1+i(n-1)}(x,y), $$
where each term in the summation is homogeneous
with the degree given by the subscript.
In each  step of normal form change of
coordinates, system (\ref{C-S}) takes the form
$$\dot X=pX+\sum_{i\geq1}\tilde\Phi_{1+i(n-1)}(X,Y),\quad  \dot Y=-qY+ \sum_{i\geq1}\tilde \Psi_{1+i(n-1)}(X,Y),   $$
where $\tilde\Phi_{1+i(n-1)}$ and $\tilde \Psi_{1+i(n-1)}$ represent the homogeneous polynomials of degree $1+i(n-1)$.
If $L_k\not=0$, then there must exist $i\geq 1$ so that
$$1+k(p+q)=1+i(n-1).$$
Denote by $q_1=(p+q)/d$. One must have that $q_1\mid i$ and $n_1 \mid k$.
\end{proof}

Now, we consider the following 1-parameter perturbed polynomial systems,
 \begin{eqnarray}\label{pertsys1}
\dot{x}=px+\varepsilon  pP(x,y), \quad
\dot{y}=-qy+\varepsilon  qQ(x,y),
\end{eqnarray}
where $P=\sum_{i+j\geq 2}p_{i,j}x^iy^j$, $Q=\sum_{i+j\geq 2}q_{i,j}x^iy^j$ and
$\varepsilon$ is a small parameter.

\begin{lemma}\label{lsaddlevalue}
The $k$-th saddle value $L_k$ of system (\ref{pertsys1}) is a polynomial of $\varepsilon$ and can be written as
$$L_k=(c_k+d_k)\varepsilon +o(\varepsilon),$$
where $c_k=p_{1+kq, kp}$ is the coefficient of term $x(x^qy^p)^k$ of $P(x,y)$ and $d_k=q_{kq,1+kp}$ is the coefficient of term $y(x^qy^p)^k$ of $Q(x,y)$.
\end{lemma}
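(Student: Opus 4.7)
My plan is to compute the linear-in-$\varepsilon$ term of each $L_k$ by running the first nontrivial step of the normal form algorithm recalled in Section~\ref{Pre}. Since that algorithm expresses $L_k=\pi_k$ as a polynomial in the coefficients of the right-hand side of (\ref{C-S}), and those coefficients are linear in $\varepsilon$ under (\ref{pertsys1}), $L_k$ is automatically a polynomial in $\varepsilon$. Moreover, at $\varepsilon=0$ the system reduces to its linear part $\dot x=px$, $\dot y=-qy$, which is already in normal form with $\pi_{1,k}=\pi_{2,k}=0$ for all $k$; hence $L_k(0)=0$ and $L_k=\alpha_k\varepsilon+o(\varepsilon)$, and the task reduces to identifying $\alpha_k=c_k+d_k$.

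Next, I linearize (\ref{change}) by writing $X=x+\varepsilon\Phi(x,y)+O(\varepsilon^{2})$, $Y=y+\varepsilon\Psi(x,y)+O(\varepsilon^{2})$ and $\pi_{i,k}=\pi_{i,k}^{(1)}\varepsilon+O(\varepsilon^{2})$, and let $X_0=px\partial_x-qy\partial_y$ denote the unperturbed vector field. A direct order-$\varepsilon$ expansion of $\dot X$ and $\dot Y$ using $\dot x=px+\varepsilon pP$, $\dot y=-qy+\varepsilon qQ$, matched against the normal form targets $\dot X=pX\bigl(1+\sum_{k}\pi_{1,k}(X^{q}Y^{p})^{k}\bigr)$ and $\dot Y=-qY\bigl(1+\sum_{k}\pi_{2,k}(X^{q}Y^{p})^{k}\bigr)$, yields the two cohomological equations
\begin{equation*}
pP=(p\,\mathrm{Id}-X_{0})\Phi+px\sum_{k\ge1}\pi_{1,k}^{(1)}(x^{q}y^{p})^{k},
\end{equation*}
\begin{equation*}
qQ=-(q\,\mathrm{Id}+X_{0})\Psi-qy\sum_{k\ge1}\pi_{2,k}^{(1)}(x^{q}y^{p})^{k}.
\end{equation*}

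Finally, since $X_{0}(x^{i}y^{j})=(ip-jq)x^{i}y^{j}$, the operator $p\,\mathrm{Id}-X_{0}$ has kernel spanned by $\{x^{1+kq}y^{kp}\}_{k\ge0}$ and $q\,\mathrm{Id}+X_{0}$ by $\{x^{kq}y^{1+kp}\}_{k\ge0}$, while every other monomial is uniquely removable by a suitable choice of $\Phi$ or $\Psi$. Equating the coefficients of the resonant monomials on both sides of the two cohomological equations gives $pc_{k}=p\,\pi_{1,k}^{(1)}$ and $qd_{k}=-q\,\pi_{2,k}^{(1)}$, i.e., $\pi_{1,k}^{(1)}=c_{k}$ and $\pi_{2,k}^{(1)}=-d_{k}$. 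Subtracting yields $L_{k}=\pi_{1,k}-\pi_{2,k}=(c_{k}+d_{k})\varepsilon+o(\varepsilon)$, as claimed.

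The part that requires real care is the coefficient matching at order $\varepsilon$. One must verify that on the normal form side the factor $(X^{q}Y^{p})^{k}$ can safely be replaced by $(x^{q}y^{p})^{k}$ (the discrepancy is $O(\varepsilon)$ and is multiplied by another $O(\varepsilon)$ term), and that the nonresonant parts of $\Phi$ and $\Psi$ contribute nothing to the resonant coefficients; once this bookkeeping is under control, the extraction of $\pi_{i,k}^{(1)}$ is a one-line comparison.
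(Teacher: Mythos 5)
Your proof is correct, but it takes a genuinely different route from the paper's. The paper quotes Lemma~2.3 of Ilyashenko--Pyartli to the effect that $A_k(\varepsilon)-B_k(\varepsilon)$ is a quasi-homogeneous polynomial of degree $k(p+q)$ in the coefficients $\varepsilon p_{i-1,j}$, $\varepsilon q_{i,j-1}$ weighted by $i+j-1$; a weight count then shows that the only coefficients that can occur linearly are the resonant ones $p_{1+kq,kp}$ and $q_{kq,1+kp}$ (the unique ones of full weight $k(p+q)$), all lower-weight coefficients entering only through products, whence the stated linear part. You instead bypass the citation entirely: you observe that the unperturbed system is already in normal form so $L_k=O(\varepsilon)$, derive the order-$\varepsilon$ cohomological equations, and read off $\pi_{1,k}^{(1)}=c_k$, $\pi_{2,k}^{(1)}=-d_k$ from the fact that $p\,\mathrm{Id}-X_0$ and $q\,\mathrm{Id}+X_0$ act diagonally on monomials with kernels exactly the resonant ones (so neither the nonresonant corrections $\Phi,\Psi$ nor the $O(\varepsilon)$ discrepancy between $(X^qY^p)^k$ and $(x^qy^p)^k$ can pollute the linear term). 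Your computation is self-contained and more elementary, and in particular makes the uniqueness of the first-order resonant coefficients transparent; the paper's argument is shorter on the page and yields as a by-product the quasi-homogeneous structure of the higher-order terms in $\varepsilon$, which is not needed for this lemma. Both arguments establish the statement.
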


\begin{proof} If $X=x+o(x,y), \ Y=y+o(x,y)$ is the change to normal form, then the normal form is
given by
$$\dot X=pX\biggl(1+\sum_{k=1}^{+\infty}A_{k}(\varepsilon)(X^qY^p)^k
\biggr) ,\quad \dot Y=-qY\biggl(1+ \sum_{k=1}^{+\infty}B_{k}(\varepsilon)  (X^qY^p)^k        \biggr),$$
where $A_{k}(\varepsilon)-B_{k}(\varepsilon)$ are the saddle values,
which, according to
Lemma 2.3 of \cite{IP},
are  quasi-homogeneous polynomials of
degree $k(p+q)$ of the coefficients $\varepsilon p_{i-1,j}$  and $\varepsilon q_{i,j-1}, i+j\leq k(p+q)+1$,  with weight $i+j-1$ of the original system.  Notice that here we adopt a conventional definition
of    quasi-homogeneous polynomials. Namely,
 a
polynomial $P(z_1,z_2,\cdots, z_N)$ is quasi-homogeneous of degree $m$ with weight $\alpha=(\alpha_1,\alpha_2,\cdots,\alpha_N)$
for the variables $z_1,\dots, z_N$ if
$$P(t^\alpha_1 z_1, t^\alpha_2z_2,\cdots, t^\alpha_Nz_N)=t^mP(z_1,z_2,\cdots, z_N).$$

In our case, $A_{k}(\varepsilon)$ ($B_{k}(\varepsilon)$, resp.)  is of degree $1$ of
the coefficient of $x(x^qy^p)^k$ of $P(x,y)$ ($y(x^qy^p)^k$ of $Q(x,y)$, resp.)  and of degree higher than $1$ in $\varepsilon p_{i,j}$
and $\varepsilon q_{i,j}$  with $i+j\leq k(p+q)$. We have
$$A_{k}(\varepsilon)=p_{1+kq, kp}\varepsilon+o(\varepsilon), \quad  B_{k}(\varepsilon)=-q_{kq,1+kp}\varepsilon+o(\varepsilon),$$
since they are coefficients of resonant items and the coefficients of lower degree do not effect their linear part.
\end{proof}

Consider the following system
\begin{equation}\label{pertsys}
\begin{array}{ll}
\dot{x}=px (1-U(x,y))+\varepsilon  pP(x,y), \\     \\
\dot{y}=-q y (1-U(x,y))+\varepsilon  qQ(x,y),
\end{array}
\end{equation}
where  $U$ is a polynomial without constant term, $P$ and $Q$
have nonlinear terms only.
Since  system (\ref{pertsys}) can be transformed to the system
\begin{equation}\label{pertsys2}
\begin{array}{ll}\dot{x}=px+\varepsilon  \frac{pP(x,y)}{1-U(x,y)}=px+\varepsilon  pP(x,y)\sum_{i=0}^{\infty}U^i, \\       \\
\dot{y}=-q y+\varepsilon  \frac{qQ(x,y)}{1-U(x,y)}=-qy+\varepsilon  qQ(x,y)\sum_{i=0}^{\infty}U^i, \end{array}
\end{equation}
 we can use the saddle values  of system (\ref{pertsys2}) instead of the saddle values  of system (\ref{pertsys}).

\begin{corollary}\label{lineorder}
The $k$-th saddle value $L_k$ of system (\ref{pertsys2}) can be written as
$$L_k=L_k(1)\varepsilon+o(\varepsilon),$$
where $L_k(1)$ is the coefficient of term $(x^qy^p)^{k+1}$ of  the power series$$\biggl(x^{q-1} y^p P+x^q y^{p-1} Q\biggr) \sum_{i=0}^{\infty}U^i.$$
\end{corollary}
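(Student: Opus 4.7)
The plan is to reduce this corollary to a direct application of Lemma \ref{lsaddlevalue}. I would view system (\ref{pertsys2}) as an instance of system (\ref{pertsys1}) in which the polynomial perturbations are replaced by the formal power series $\widetilde{P}:=P(x,y)\sum_{i\ge 0}U^i$ and $\widetilde{Q}:=Q(x,y)\sum_{i\ge 0}U^i$ (well-defined because $U$ has no constant term). The slight obstacle here is that Lemma \ref{lsaddlevalue} is stated for polynomial perturbations, but each saddle value $L_k$ is, by the normal-form construction recalled in the preceding lemma, a polynomial in only finitely many Taylor coefficients of the perturbation at the origin; hence one may truncate $\sum_{i\ge 0}U^i$ at any sufficiently high order without affecting $L_k$. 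Thus the lemma applies verbatim to the truncation, and the statement passes to the formal series.

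Once this is granted, I would read off from Lemma \ref{lsaddlevalue} that the coefficient of $\varepsilon$ in $L_k$ equals $\tilde c_k+\tilde d_k$, where $\tilde c_k$ is the coefficient of $x(x^qy^p)^k=x^{kq+1}y^{kp}$ in $\widetilde{P}$ and $\tilde d_k$ is the coefficient of $y(x^qy^p)^k=x^{kq}y^{kp+1}$ in $\widetilde{Q}$.

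The remaining step is a purely monomial bookkeeping. Multiplication by $x^{q-1}y^p$ sends $x^{kq+1}y^{kp}$ to $x^{(k+1)q}y^{(k+1)p}=(x^qy^p)^{k+1}$, and multiplication by $x^qy^{p-1}$ sends $x^{kq}y^{kp+1}$ to $(x^qy^p)^{k+1}$ as well; both operations are legitimate since $p,q\ge 1$. Consequently
$$\tilde c_k+\tilde d_k=[\,(x^qy^p)^{k+1}\,]\Bigl(x^{q-1}y^p\,\widetilde{P}+x^qy^{p-1}\,\widetilde{Q}\Bigr)=[\,(x^qy^p)^{k+1}\,]\Bigl((x^{q-1}y^p P+x^qy^{p-1}Q)\sum_{i\ge 0}U^i\Bigr),$$
which is precisely the quantity $L_k(1)$ in the statement. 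Putting the two steps together yields $L_k=L_k(1)\varepsilon+o(\varepsilon)$.

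The only conceptually nontrivial point is the first-step justification that a formal-series perturbation may be fed into Lemma \ref{lsaddlevalue}; after that, the proof reduces to matching exponents, so I expect no serious technical hurdle.
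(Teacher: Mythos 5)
Your proposal is correct and follows essentially the same route as the paper: apply Lemma \ref{lsaddlevalue} to system (\ref{pertsys2}) with $P\sum_{i\ge 0}U^i$ and $Q\sum_{i\ge 0}U^i$ playing the roles of $P$ and $Q$, then match monomials via multiplication by $x^{q-1}y^p$ and $x^qy^{p-1}$. The only difference is that you explicitly justify feeding a formal power series into the lemma by truncation (each $L_k$ depends on finitely many coefficients), a point the paper leaves implicit; this is a welcome bit of extra rigor rather than a divergence.
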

\begin{proof}
By Lemma \ref{lsaddlevalue}, the
$k$-th saddle value $L_k$ of system (\ref{pertsys2}) can be written as
$$L_k=L_k(1)\varepsilon+o(\varepsilon),$$
where $L_k(1)$ is the sum of the coefficient of term $x(x^qy^p)^k$ of $P \sum_{i=0}^{\infty}U^i$ and
the coefficient of term $y(x^qy^p)^k$ of $Q \sum_{i=0}^{\infty}U^i$.
Clearly this sum is equal to the coefficient of term $(x^qy^p)^{k+1}$ of  $(x^{q-1} y^p P+x^q y^{p-1} Q) \sum_{i=0}^{\infty}U^i$.
\end{proof}

\section{Proof of Theorems}\label{proofth}
Below we consider systems (\ref{pertsys2})
with homogeneous $U$, $P$ and $Q$, where $P$ and $Q$ are chosen as
\begin{eqnarray}\label{choosePQ}
 P=\xi_{n+2} y^{n}, \quad Q=\sum_{j=1}^{n+1}\xi_{j} x^{n-j+1}y^{j-1}.
 \end{eqnarray}

Clearly the conclusion of Lemma \ref{Homo} is also valid for system (\ref{pertsys2}),
i.e., the saddle value $L_k=0$ when $n_1\nmid k$. Hence we only need to consider the saddle values
 $L_{mn_1}, m>0$.
 Notice that  all of them are polynomials of variables $\varepsilon$ and $\xi_{1},...,\xi_{n+2}$.
By Corollary \ref{lineorder}, $L_{mn_1}$  can be written as
$$L_{mn_1}=L_{mn_1}(1)\varepsilon+o(\varepsilon), $$
where $L_{mn_1}(1)$ is the coefficient of term $(x^{q}y^{p})^{mn_1+1}$ of the polynomial
\begin{eqnarray}\label{L1kPoly}
\biggl(\sum_{j=1}^{n+2}\xi_{j}x^{q+n-j+1}y^{p+j-2}\biggr) U^{i_{m}},
 \qquad i_{m}=\frac{m(p+q)}{d}-1.
\end{eqnarray}

It is easy to see that
 $L_{mn_1}(1)$ of system (\ref{pertsys2}) is a linear combination of  $\xi_1, \dots, \xi_{n+2}$,
 $$ L_{mn_1}(1)=\sum_{j=1}^{n+2}a_{mj}\xi_j,$$
 where $a_{mj}$ is the coefficient of term $x^{qmn_1-n-1+j}y^{pmn_1+2-j}$ of $U^{i_{m}}$.
If we let $\mathbf{L}$,
$\mathbf{L}_{1}$ and $\boldsymbol{\xi}$ be the column vectors consisting of $\{L_{mn_1}\}$, $\{L_{mn_1}(1)\}$ and $\{\xi_{m}\}, 1\leq m\leq n+1$, respectively,
 i.e.,
\begin{eqnarray*}
&&\mathbf{L}=(L_{n_1}, L_{2n_1}, \cdots, L_{(n+1)n_1})^{\top}, \\
&&\mathbf{L}_{1}=(L_{n_1}(1), L_{2n_1}(1), \cdots, L_{(n+1)n_1}(1))^{\top},\\
&&\boldsymbol{\xi}=(\xi_{1}, \xi_{2},  \cdots, \xi_{n+2})^{\top},
\end{eqnarray*}
then $\mathbf{L_{1}}=\mathbf{A} \boldsymbol{\xi},$ where $\mathbf{A}=(a_{mj})$ is an $(n+1)\times(n+2)$ matrix, and $$\mathbf{L}=\varepsilon \mathbf{L_1}+o(\varepsilon)=\varepsilon \mathbf{A} \boldsymbol{\xi}+o(\varepsilon).$$

For system (\ref{pertsys2}) with $P$ and $Q$ having the form   in (\ref{choosePQ}), we have the following  important    lemma.
The proof of this lemma is quite technical,
and we shall put it  at the end of the paper.

\begin{lemma}\label{TMn0matrix}
For sufficiently big $n$, there exists a homogeneous polynomial $U$ of degree $n-1$ such that    $Rank(\mathbf{A})$,
the rank of $\mathbf{A}$,  is $n+1$.
\end{lemma}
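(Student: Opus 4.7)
The plan is to prove the lemma by exhibiting an explicit homogeneous polynomial $U$ of degree $n-1$ and verifying that the corresponding matrix $\mathbf{A}$ has rank $n+1$. Since $\mathbf{A}$ is $(n+1)\times(n+2)$, it suffices to locate an $(n+1)\times(n+1)$ submatrix with nonvanishing determinant. The strategy is to regard the coefficients of a parameterized family $U = \sum_{k=0}^{n-1} u_{k} x^{n-1-k} y^{k}$ as formal indeterminates: by the multinomial theorem each entry $a_{m,j}$ is a polynomial in $u_{0},\dots,u_{n-1}$, and it is enough to show that some $(n+1)\times(n+1)$ minor, viewed as a polynomial in these parameters, is not identically zero. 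Once this is done, any specialization of $u_{0},\dots,u_{n-1}$ outside the hypersurface cut out by this polynomial furnishes a valid $U$.

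To establish the nonvanishing of such a minor, I would first select $n+1$ of the $n+2$ columns whose index range is compatible with the support of $U^{i_{m}}$ for each row $m$, and then within each row single out a multinomial contribution to $a_{m,j}$ involving a specific monomial $\prod_{k} u_{k}^{e_{k}}$ which is not produced by any other diagonal entry. If the ``witness'' exponent vectors attached to the diagonal are pairwise distinct across the chosen rows, then in the expansion of the determinant there is a monomial in $u_{0},\dots,u_{n-1}$ whose coefficient cannot be cancelled, which forces the determinant to be a nonzero polynomial. Concretely, it is natural to first try a sparse choice such as $U = u_{0} x^{n-1} + u_{n-1} y^{n-1} + \lambda x^{a} y^{n-1-a}$ for a well-chosen middle exponent $a$, so that the three-term multinomial expansion of $U^{i_{m}}$ is tractable and each row of $\mathbf{A}$ becomes a polynomial in the auxiliary parameter $\lambda$ whose leading coefficient can be explicitly identified.

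The main obstacle, and the source of the remark that the proof is ``quite technical'', is the combinatorial bookkeeping needed to guarantee that the above diagonal of witness monomials exists for all $n+1$ rows simultaneously, while keeping track of the fact that $qmn_{1}-n-1+j$, $pmn_{1}+2-j$ and $i_{m} = m(p+q)/d - 1$ grow linearly in $m$ with different slopes. The hypothesis $n \gg p+q$ provides enough room in these exponents for the admissible ranges of $(s,t)$ to sweep out the chosen columns across all $m\in\{1,\dots,n+1\}$, but the exact choice of the middle exponent $a$ (or of a slightly larger collection of monomials, if needed) has to be tuned to the residue of $n$ modulo $p+q$, which governs $d$ and hence the spacing $n_{1} = (n-1)/d$ along the rows. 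Once a suitable $U$ is identified, the proof concludes by computing the leading coefficient of the chosen minor as a polynomial in $\lambda$ (or in whichever auxiliary parameter is introduced) and checking that a top-degree monomial survives via a unique ordering of the multinomial contributions along the diagonal.
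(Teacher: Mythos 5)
Your high-level reduction is sound: the entries of $\mathbf{A}$ are polynomials in the coefficients of $U$, so it suffices to exhibit one $(n+1)\times(n+1)$ minor that is not identically zero, and a witness-monomial argument in a determinant expansion is a legitimate way to do that. But the proposal stops exactly where the lemma's actual content begins: you name the construction of a simultaneous diagonal of witness monomials for all $n+1$ rows as ``the main obstacle'' and then do not overcome it. As written, this is a plan for a proof, not a proof.

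Moreover, your concrete candidate $U=u_{0}x^{n-1}+u_{n-1}y^{n-1}+\lambda x^{a}y^{n-1-a}$ is likely to fail when $d=(n-1,p+q)>1$, which is the hard case. At $\lambda=0$, the entry $a_{mj}$ is the coefficient of $x^{qmn_{1}-n-1+j}y^{pmn_{1}+2-j}$ in $(u_0x^{n-1}+u_{n-1}y^{n-1})^{i_m}$, which is nonzero only when $(n-1)\mid(pmn_{1}+2-j)$; since $pmn_{1}=pm(n-1)/d$ is a multiple of $n_{1}$, the residue $pmn_{1}\bmod(n-1)$ takes only $d$ distinct values as $m$ varies, so the unperturbed matrix has all its nonzero entries concentrated in at most about $2d$ columns and has rank far below $n+1$. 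A single middle monomial then has to supply rank of order $n$ through high powers of $\lambda$, which destroys the ``tractable leading-order'' analysis you describe. The paper's proof circumvents precisely this: it first chooses (via an auxiliary number-theoretic lemma) a divisor $p'$ of $p$ with $(n-1-p',pp')=1$ and takes the base polynomial $f=x^{p'}(x^{N_{1}}+y^{N_{1}})$ with $N_{1}=n-1-p'$, so that the congruence governing the nonzero entries runs modulo $N_{1}$ with $(pp',N_{1})=1$ and the top $N_{1}\times N_{1}$ block of $\mathbf{A}_{0}$ is already a generalized permutation matrix; only $2+p'$ residual rows remain, and these are handled at first order in a small parameter $\mu$ by a perturbation $g$ consisting of $2+p'$ carefully chosen monomials (one per residual row), followed by a block elimination and an explicit nonvanishing check of the resulting $(2+p')\times(2+p')$ block. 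The shift by $p'$ and the need for several perturbing monomials rather than one are the essential ideas missing from your proposal.
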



\subsection{Proof of Theorem \ref{TM1} and Theorem \ref{TM2}} 
\begin{proof}[Proof of Theorem \ref{TM1}]
To prove Theorem \ref{TM1},
it is sufficient to show that
there exists a system (\ref{pertsys2}) such that
its saddle order is $\frac{n^2-1}{d}$.

In  system (\ref{pertsys2}), we let $P$ and $Q$
take the forms as given in (\ref{choosePQ}). By Lemma \ref{TMn0matrix}, we can
 choose  a homogeneous $U$ of degree $n-1$
such that $Rank(\mathbf{A})=n+1.$ Thus for $\mathbf{L_1}=(0,0,\cdots, 0,1)^{\top}$, the equation $\mathbf{L_{1}}=\mathbf{A} \boldsymbol{\xi}$
has a nonzero solution
$$\boldsymbol{\xi}=(\xi_1^*, \xi_2^*, \cdots, \xi_{n+2}^*)^{\top}.$$

Denote by $\mathbf{L'}=\mathbf L/\varepsilon$, then $\mathbf{L'}=\mathbf{A} \boldsymbol{\xi}+o(1)$.
Clearly the Jacobian matrix $\frac{\partial \mathbf{L'}}{\partial \boldsymbol{\xi}}|_{\varepsilon=0}=\mathbf{A}$,
which is a row full rank matrix. Then by implicit function theorem, there exists functions $\xi_1(\varepsilon), \xi_2(\varepsilon), \cdots,
\xi_{n+2}(\varepsilon)$, analytic in $\varepsilon$,  so that
$$ \mathbf{L'}=(0,0,\cdots, 0,1)^{\top},$$
and  $(\xi_1(0), \xi_2(0), \cdots,
\xi_{n+2}(0))=(\xi_1^*, \xi_2^*, \cdots, \xi_{n+2}^*).$ Thus, for sufficiently small  $\varepsilon >0$, we have
$$\mathbf L=\varepsilon \mathbf{L'}=(0,0,\cdots, 0,\varepsilon)^{\top}.$$
In other words,
\begin{eqnarray*}\label{fchz}
\begin{cases}
L_{mn_1}=0,   \quad  m=1,2,...,n,\\     \\
L_{(n+1)n_1}=\varepsilon \neq 0.
\end{cases}
\end{eqnarray*}
This implies that there at least exists a saddle system with  homogeneous nonlinearities of degree $n$
such that its saddle order is $\frac{(n+1)(n-1)}{d}=\frac{n^2-1}{d}$.
Then Theorem \ref{TM1} is proved.
\end{proof}

\begin{proof}[Proof of Theorem \ref{TM2}]
For any sufficiently big $n$, suppose that $r\equiv n\,\, \mbox{mod}\  p+q$, where $0\leq r\leq p+q-1$, and denote by $n'=n-r$,
 then $(n'-1, p+q)=1$. By Theorem 1, there exists a system
 $$\dot x=px+P(x,y), \qquad  \dot y=-qy+Q(x,y),$$
 where $P$ and $Q$ are homogeneous polynomials of degree $n'-1$, so that the order of the origin of this system  is
 $n'^2-1=n^2-2rn+r^2-1$.

 Obviously, the following system
 $$\dot x=(px+P(x,y))(1+x^r), \qquad  \dot y=(-qy+Q(x,y))(1+x^r),$$
is a system of degree $n$ and the origin is a $p:-q$ saddle of order $n^2-2rn+r^2-1$.
So $M(p,q,n)\geq n^2-2rn+r^2-1$, the proof is finished.
\end{proof}

\subsection{Proof of Theorem \ref{TMideal} and Theorem \ref{TMideal2}}

Before giving a detailed proof of these two theorems, we need the following lemma.
\begin{lemma}\label{p'}
For any given positive integers $n$ and $p$, there exists a  positive integer $p'$, which is a factor of $p$,  such that $(n-1-p', pp')=1$.
\end{lemma}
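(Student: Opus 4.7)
The plan is to reduce the coprimality condition to one involving only $p$, and then construct $p'$ prime by prime from the divisors of $p$.

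First I would observe that, since $p'$ is required to divide $p$, the set of prime divisors of $pp'$ coincides with the set of prime divisors of $p$. Consequently the condition $(n-1-p', pp')=1$ is equivalent to the much simpler condition $(n-1-p', p)=1$, i.e., to the statement that no prime factor of $p$ divides $n-1-p'$. This reduction allows me to forget about $pp'$ altogether and focus on the prime divisors of $p$.

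Next I would construct $p'$ explicitly. Let $q_{1},\ldots,q_{s}$ be the distinct prime divisors of $p$, and partition them according to whether they divide $n-1$. Define
$$p' = \prod_{q_{i}\nmid n-1} q_{i}^{v_{q_{i}}(p)},$$
where $v_{q_{i}}(p)$ denotes the $q_{i}$-adic valuation of $p$ and the empty product is understood to be $1$. By construction $p'$ is a positive integer dividing $p$, which is the required form.

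To verify that $(n-1-p', p)=1$, I would examine each prime $q$ dividing $p$ in turn. If $q\mid n-1$, then by construction $q\nmid p'$, so $n-1-p' \equiv -p' \not\equiv 0 \pmod{q}$. If $q\nmid n-1$, then $q\mid p'$, so $n-1-p' \equiv n-1 \not\equiv 0 \pmod{q}$. In either case $q\nmid n-1-p'$, which finishes the argument.

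The proof is elementary and I do not foresee a genuine obstacle; the only thing to be mindful of is the degenerate case in which every prime divisor of $p$ also divides $n-1$, which is handled smoothly by the convention that the empty product equals $1$ (so that $p'=1$, and $(n-2,p)=1$ follows because every prime $q\mid p$ divides $n-1$ and hence cannot divide $n-2$).
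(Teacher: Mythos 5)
Your proof is correct and follows essentially the same route as the paper: partition the prime divisors of $p$ according to whether they divide $n-1$ and let $p'$ collect those that do not, then check coprimality prime by prime. The only cosmetic difference is that you take $p'$ with full multiplicities $q_i^{v_{q_i}(p)}$ while the paper uses the product of the first powers; both divide $p$ and both verifications are identical, and your explicit reduction of $(n-1-p',pp')=1$ to $(n-1-p',p)=1$ is a valid clarification of what the paper calls ``straightforward to check.''
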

\begin{proof}
If $p=1$, obviously we take $p'=1$.
For $p>1$, suppose that
$$p=\prod_{i=1}^{k} p_i^{r_i},$$
where $p_1, p_2, \cdots, p_k$ are different prime numbers.
If all of these $p_{i}$ are also the factors of $n-1$, then we take $p'=1$.
For other cases,  we assume that, without loss of generality,
$p_1, p_2, \cdots, p_m$ are the factors of $n-1$ and $p_{m+1}, p_{m+2}, \cdots, p_k$ are not. 
Let $p'=\prod_{i=m+1}^{k} p_i$.
It is straightforward to check that $(n-1-p', pp')=1$.
\end{proof}

Now  choose  $p'$  such that it is a factor of $p$ and $(n-1-p',pp')=1$, and denote by  $N_{1}=n-1-p'$.
Note that $(p,q)=1$ and $(p+q,n-1)=d$, we have the following corollary immediately.

\begin{corollary}\label{cor2}
$(N_{1},d)=1$, and $(pp',d)=1$.
\end{corollary}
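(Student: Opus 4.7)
The plan is to prove both assertions by a short prime-factor argument, exploiting the defining property of $p'$ from Lemma \ref{p'} together with the coprimality hypothesis $(p,q)=1$ and the fact that $d$ divides both $n-1$ and $p+q$.

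For the first claim $(N_{1},d)=1$, I would argue by contradiction: suppose some prime $r$ divides both $N_{1}=n-1-p'$ and $d$. Since $d\mid n-1$, we get $r\mid n-1$, and combining with $r\mid n-1-p'$ yields $r\mid p'$. But then $r$ is a common prime divisor of $N_{1}$ and $pp'$, which contradicts the property $(n-1-p',pp')=1$ guaranteed by Lemma \ref{p'}. Hence no such $r$ exists.

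For the second claim $(pp',d)=1$, I would again suppose a prime $r$ divides both $pp'$ and $d$. Because $p'$ is a factor of $p$, every prime dividing $pp'$ must in fact divide $p$, so $r\mid p$. On the other hand $r\mid d\mid p+q$, which together with $r\mid p$ forces $r\mid q$. This contradicts the standing assumption $(p,q)=1$, finishing the proof.

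The whole argument is elementary number theory; the only thing to be careful about is to use the two key pieces of information in the right places, namely $d\mid n-1$ (for the first part) and $d\mid p+q$ combined with $(p,q)=1$ (for the second part). There is no genuine obstacle, and the corollary follows immediately from Lemma \ref{p'} together with the definition of $d=(p+q,n-1)$.
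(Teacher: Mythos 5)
Your proof is correct and is precisely the elementary argument the paper has in mind (the paper states the corollary follows "immediately" from $(p,q)=1$, $d=(p+q,n-1)$, and Lemma \ref{p'}, without writing out the details). Both steps — deducing $r\mid p'$ from $r\mid d\mid n-1$ and $r\mid n-1-p'$, and deducing $r\mid q$ from $r\mid p$ and $r\mid d\mid p+q$ — are exactly the intended uses of the hypotheses.
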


Since $(N_1, pp')=1$, for any $j\in \mathbb N$, there exists an integer $s_j$, $0<s_j\leq N_1$, so that
$s_j pp' \equiv d(j-2)$ mod $ N_{1}$. Obviously, if $1\leq j\leq 3+p', j\not=2$, then $0<s_j<N_1$.
Denote by $N_{2}=\max\{s_j|1\leq j\leq 3+p', j\not=2\}$, then $0<N_2<N_1$.

\begin{lemma}\label{N1N2}
$N_{2}\geq N_{1}/2$.
In particular, if $p=1$, then $N_{1}=n-2$ and $N_{2}=n-2-d$.
\end{lemma}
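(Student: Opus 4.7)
The plan is to split this lemma into its two claims and to handle each by direct manipulation of the defining congruences $s_j p p' \equiv d(j-2) \pmod{N_1}$.

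For the general bound $N_2 \geq N_1/2$, I would single out the two indices $j=1$ and $j=3$, which sit symmetrically on either side of the excluded value $j=2$. They yield $s_1 p p' \equiv -d \pmod{N_1}$ and $s_3 p p' \equiv d \pmod{N_1}$. Adding these two congruences and cancelling the factor $pp'$—which is legitimate because Lemma \ref{p'} was designed precisely so that $(N_1, pp')=1$—gives $s_1 + s_3 \equiv 0 \pmod{N_1}$. I next want to rule out $s_1 = N_1$ or $s_3 = N_1$, so that both lie in the open interval $(0, N_1)$; this is immediate because for large $n$ one has $N_1 > d$ and $(N_1, d)=1$ by Corollary \ref{cor2}, so $\pm d \not\equiv 0 \pmod{N_1}$. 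Then $s_1 + s_3 = N_1$ exactly, and since the larger of two positive numbers summing to $N_1$ is at least $N_1/2$, we obtain $N_2 \geq \max(s_1, s_3) \geq N_1/2$.

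For the sharper statement in the case $p=1$, Lemma \ref{p'} forces $p' = 1$ (since $1$ admits no other positive divisor), whence $N_1 = n-2$, and the admissible indices reduce to $\{1,3,4\}$. The congruences simplify to $s_j \equiv d(j-2) \pmod{n-2}$. Under the assumption $n \gg p+q \geq d$, the three residues $-d, d, 2d$ reduce to the canonical representatives $n-2-d$, $d$, and $2d$, all lying strictly in $(0, n-2)$. A direct comparison shows $n-2-d$ dominates both $d$ and $2d$ once $n$ is large enough (concretely, once $n > 3d+2$), so $N_2 = n-2-d$.

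The only real obstacle I anticipate is the bookkeeping around whether the canonical representative $s_j$ lands at the boundary $N_1$ or strictly inside the interval; as sketched above, this is excluded by combining Corollary \ref{cor2} with the asymptotic regime $n \gg p+q$. Once this check is in place the remainder is a one-line pigeonhole argument in the general case, and an explicit numerical comparison of three values in the case $p=1$.
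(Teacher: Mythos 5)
Your proof is correct and follows essentially the same route as the paper: adding the congruences for $j=1$ and $j=3$, cancelling $pp'$ via $(N_1,pp')=1$ to get $s_1+s_3=N_1$, and, for $p=1$, comparing the three canonical representatives $n-2-d$, $d$, $2d$ of $-d$, $d$, $2d$ modulo $n-2$. Nothing further is needed.
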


\begin{proof}

Notice that $ s_j pp' \equiv d(j-2)$ mod $N_{1}$, so
 $s_{1}+s_{3} \equiv 0$ mod $N_{1}$. On the other hand, $0<s_{1}+s_{3}< 2N_{1}$,
we have  $s_{1}+s_{3}= N_{1}$, which implies one of $s_{1}$ and $s_{3}$ is bigger than $N_{1}/2$.
By the definition of $N_{2}$, we have that $N_{2}\geq N_{1}/2$.

If $p=1$, then $p'=1$ and $N_{1}=n-2$.
Note that $s_{2}=N_{1}$ and $s_{j}-s_{2} \equiv d(j-2)$  mod $N_{1}$ for all $0< j \leq 3+p',\ j\neq 2$,
thus $s_j\leq N_{1}-d$, i.e., $N_{2}\leq N_{1}-d$.
Besides, it is easy to see that $s_{1}=N_{1}-d$, so $N_{2}= N_{1}-d$.
\end{proof}

\begin{proof}[Proof of Theorem \ref{TMideal}]
We shall show that, there exists a system of form (\ref{pertsys2}) with homogenous polynomials $P, Q, U$ of degree $n, n, n-1$ respectively,
 such that its saddle value
 $L_{(N_{1}+N_{2})n_1}$ does not belong to the
ideal $I_{(N_{1}+N_{2})n_1 -1}=I_{(N_{1}+N_{2}-1)n_1}$.

We take  $U=x^{p'}(x^{N_{1}}+y^{N_{1}})$ and choose
 $P$ and $Q$ having the form as in (\ref{choosePQ}).
Then we claim that
$$L_{(N_{1}+N_{2})n_1}\not\in I_{(N_{1}+N_{2}-1)n_1}.$$

To prove the claim, we suppose otherwise.
That is, there exist $N_{1}+N_{2}-1$ polynomials $F_{m}$, $m=1,...,N_{1}+N_{2}-1$ in $\varepsilon,\xi_{1},...,\xi_{n+2}$
such that
\begin{eqnarray*}\label{polyrep}
L_{(N_{1}+N_{2})n_1}=\sum_{m=1}^{N_{1}+N_{2}-1}F_{m} L_{mn_1}.
\end{eqnarray*}
By Corollary \ref{lineorder}, $L_{mn_1}$  can be written as
$L_{mn_1}=L_{mn_1}(1)\varepsilon+o(\varepsilon),$  where $L_{mn_1}(1)$  linearly depends on $\xi_{1},...,\xi_{n+2}$.
Comparing the coefficient of  $\varepsilon$, we have
\begin{eqnarray}\label{polyrep2}
L_{(N_{1}+N_{2})n_1}(1)=\sum_{m=1}^{N_{1}+N_{2}-1}F_{m,0} L_{mn_1}(1),
\end{eqnarray}
where $F_{m,0}$, which is from  $F_{m}$'s terms and is independent of $\varepsilon$, is also a polynomial in $\xi_{1},...,\xi_{n+2}$.
By (\ref{L1kPoly}) and $(N_{1},pp')=1$, after the straightforward calculation,
we obtain
\begin{eqnarray}\label{A0}
L_{mn_1}(1)=
\begin{cases}
C_{i_{m}}^{l_{m}} \xi_{j_{m}}+
C_{i_{m}}^{l_{m}-1} \xi_{N_{1}+j_{m}}, \quad \ \  1\leq j_{m}\leq 3+p'\\
\\
C_{i_{m}}^{l_{m}} \xi_{j_{m}}, \qquad \qquad\ \ \ \ \qquad \quad 3+p'< j_{m}\leq N_{1}
\end{cases},
\end{eqnarray}
where $j_{m}$ and $l_{m}$ satisfy
\begin{eqnarray}\label{jmlm}
d(j_{m}-2)&\equiv & mpp' \ \ \mbox{mod}\  N_{1},\ \ 1\leq j_{m}\leq N_{1}, \nonumber\\
\\      \nonumber
dl_{m}&=&mp+\frac{mpp'-d(j_{m}-2)}{N_{1}}.
\end{eqnarray}
According to the definition of $N_{2}$, we have $0<j_{N_{2}}\leq 3+p'$.

Since the equation (\ref{polyrep2}) holds for any values of $\xi_{1},...,\xi_{n+2}$,
it certainly holds for $\xi_{m}=0$ except $m=j_{N_{2}}$ and $m=N_{1}+j_{N_{2}}$, i.e.,
the following equation holds for any values of $\xi_{j_{N_{2}}}$ and $\xi_{N_{1}+j_{N_{2}}}$,
\begin{eqnarray*}\label{polyrep3}
C_{i_{N_{1}+N_{2}}}^{l_{N_{1}+N_{2}}} \xi_{j_{N_{2}}}+
C_{i_{N_{1}+N_{2}}}^{l_{N_{1}+N_{2}}-1} \xi_{N_{1}+j_{N_{2}}}=\widetilde{F}_{N_{2},0} (C_{i_{N_{2}}}^{l_{N_{2}}} \xi_{j_{N_{2}}}+
C_{i_{N_{2}}}^{l_{N_{2}}-1} \xi_{N_{1}+j_{N_{2}}}),
\end{eqnarray*}
where $\widetilde{F}_{N_{2},0}(\xi_{j_{N_{2}}},\xi_{N_{1}+j_{N_{2}}})=F_{N_{2},0}|_{\xi_{m}=0,m\neq j_{N_{2}},N_{1}+j_{N_{2}}}$.
The above equation implies that there exists a nonzero constant $\beta$ such that
\begin{eqnarray*}
C_{i_{N_{1}+N_{2}}}^{l_{N_{1}+N_{2}}} \xi_{j_{N_{2}}}+
C_{i_{N_{1}+N_{2}}}^{l_{N_{1}+N_{2}}-1} \xi_{N_{1}+j_{N_{2}}}=\beta (C_{i_{N_{2}}}^{l_{N_{2}}} \xi_{j_{N_{2}}}+
C_{i_{N_{2}}}^{l_{N_{2}}-1} \xi_{N_{1}+j_{N_{2}}}).
\end{eqnarray*}
Thus
\begin{eqnarray*}
0=\frac{C_{i_{N_{1}+N_{2}}}^{l_{N_{1}+N_{2}}}}{C_{i_{N_{1}+N_{2}}}^{l_{N_{1}+N_{2}}-1}}- \frac{C_{i_{N_{2}}}^{l_{N_{2}}}}{C_{i_{N_{2}}}^{l_{N_{2}}-1}}
&=&\frac{i_{N_{1}+N_{2}}+1}{l_{N_{1}+N_{2}}}-\frac{i_{N_{2}}+1}{l_{N_{2}}}\\
&=&\frac{(p+q)((dl_{N_{2}}-pN_{2})N_{1}-N_{2}pp_{0})}{d^{2}l_{N_{2}}l_{N_{1}+N_{2}}}.
\end{eqnarray*}
On the other hand, since $0<N_{2}<N_{1}$,     we have
$$(dl_{N_{2}}-pN_{2})N_{1}-N_{2}pp'\equiv -N_{2}pp' \not \equiv 0 \  \ \mbox{mod}\  N_{1},$$
which leads to a contradiction.

Finally, by Lemma \ref{N1N2}, we have
\begin{eqnarray*}
&&M^{I}_{h}(p,q,n)\geq (N_{1}+N_{2})n_{1}\geq \frac{3}{2d} n^{2}-\frac{3}{2d}((2+p')n-1-p'),\\
&&M^{I}_{h}(1,q,n)\geq (N_{1}+N_{2})n_{1}=  \frac{2}{d} n^{2}-\frac{1}{d}((6+d)n-4-d).
\end{eqnarray*}
Thus the proof is done.
\end{proof}

\begin{proof}[Proof of Theorem \ref{TMideal2}]
The proof of this theorem essentially  takes
the same pattern as
the proof of Theorem \ref{TM2}. Therefore we omit the details.
\end{proof}

\subsection{ Proof of Lemma \ref{TMn0matrix}}

In this subsection, we
give a proof of  Lemma \ref{TMn0matrix}.

Let
$$U=f(x,y)+\mu  g(x,y),\qquad  0<\mu\ll 1,$$ where
 $f=x^{p'} (x^{N_{1}}+y^{N_{1}})$,
and $g$ is a homogeneous polynomial of degree $n-1$
 to be determined later. Here $p'$ and $N_1$ are the same as $p'$ and $N_1$ in Subsection 3.2.

\begin{proof}[Proof of Lemma \ref{TMn0matrix}]
Substituting $U=f(x,y)+\mu  g(x,y)$ into (\ref{L1kPoly}), we obtain
\begin{eqnarray}\label{L1kPoly2}
\biggl(\sum_{j=1}^{n+2}\xi_{j}x^{q+n-j+1}y^{p+j-2}\biggr)\sum_{k=0}^{i_{m}}\mu^{k} C^{k}_{i_{m}} f^{i_{m}-k} g^{k}.
\end{eqnarray}
Then the matrix $\mathbf{A}$ can be written as
$$\mathbf{A}=\sum_{k=0}^{i_{m}}\mu^{k}\mathbf{A}_{k},$$
where $\mathbf{A}_{k}=(a_{k,mj})$ is an $(n+1)\times(n+2)$ matrix and
$a_{k,mj}$ is the coefficient of term $x^{qmn_1-n-1+j}y^{pmn_1+2-j}$ of $C^{k}_{i_{m}} f^{i_{m}-k} g^{k}$.

For $\mathbf{A}_{0}$, from (\ref{A0}) we have
\begin{eqnarray*}
a_{0,mj}=
\begin{cases}
0, \qquad \quad \, j\neq j_{m}, j_{m}+N_{1} \\     \\
C_{i_{m}}^{l_{m}}, \qquad j=j_{m}\\               \\
C_{i_{m}}^{l_{m}-1}, \quad j=j_{m}+N_{1} \leq n+2
\end{cases}.
\end{eqnarray*}
For convenience, we divide $\mathbf{A}_{k}$ into four blocks, i.e.,
\begin{eqnarray*}
\mathbf{A}_{k}=
\left(
  \begin{array}{cc}
    \mathbf{A}_{k,11} & \mathbf{A}_{k,12} \\      \\
    \mathbf{A}_{k,21} & \mathbf{A}_{k,22} \\
  \end{array}
\right),
\end{eqnarray*}
where $\mathbf{A}_{k,11}$ is comprised of the first $N_{1}$ rows and the first $N_{1}$ columns of $\mathbf{A}_{k}$.
Clearly $\mathbf{A}_{k,11}$ is a $N_1\times N_1$ square matrix,
and there is exactly one nonzero element $a_{0,mj_{m}}$ in each row and each column of $\mathbf{A}_{0,11}$,
therefore  the determinant of  $\mathbf{A}_{0,11}$
$$\det{(\mathbf{A}_{0,11})}=\pm \prod_{m=1}^{N_{1}}a_{0,mj_{m}} \neq 0.$$

We first consider the case $(p-1)p'>1$ or $d> 2+p'$.
In this case, by Corollary \ref{cor2} and (\ref{jmlm}),
 for $1\leq m \leq 2+p'$ we have
$3+p'<j_{m}\leq N_{1}.$
This implies $\mathbf{A}_{0,22}=0$. Now we need to consider $\mathbf{A}_{1,22}$.

In this case, we let
\begin{eqnarray*}
g=\sum_{m=1}^{1+p'}x^{n-1-\delta_{m}}y^{\delta_{m}}+x^{n-1-\delta_{2+p'}}y^{\delta_{2+p'}},
\end{eqnarray*}
where $\delta_{m}=j_{m}-(p'+4-m)$ for $1\leq m \leq 1+p'$ and $\delta_{2+p'}=j_{2+p'}-1$.
It is easy to see that  $0<\delta_{m}< N_{1}$.


By (\ref{L1kPoly2}), it is not hard to check that,
for $1\leq m \leq 2+p'$ and $1\leq j \leq 3+p'$,
the element $a_{1, (N_{1}+m)(N_{1}+j)}\neq 0$ only when
there exists a number $1\leq m' \leq 2+p'$ such that
$d(j+\delta_{m'}-2)\equiv mpp'$ mod $N_{1}$,  
which requires that $m'$ must be equal to $m$,
and $j=p'+4-m$ for $1\leq m \leq 1+p'$ and $j=1$ for $m = 2+p'$.
Therefore, for $m\in \{1,...,2+p'\} \cup \{N_{1}+1,...,N_{1}+2+p'\}$, and
$j\in \{1,..., 3+p'\} \cup \{N_{1}+1,...,N_{1}+3+p'\}$,
we have
\begin{eqnarray*}
a_{1,mj}=
\begin{cases}
0, \ \ \ \ \ \quad \ \ \ j\neq j'_{m}, \ N_{1}+j'_{m}\\       \\
i_{m}C_{i_{m}-1}^{l_{m}}, \ \  j=j'_{m} \\                  \\
i_{m}C_{i_{m}-1}^{l_{m}-1}, \ \ j=N_{1}+j'_{m}
\end{cases},
\end{eqnarray*}
where $j'_{m}=j'_{N_{1}+m}=p'+4-m$ for $1\leq m \leq 1+p'$ and $j'_{2+p'}=j'_{N_{1}+2+p'}=1$.
Obviously $j'_{m}\neq 2$.

Denote by $\mathbf{A}'=\mathbf{J}_{1}\mathbf{A}\mathbf{J}_{2}$, where
\begin{eqnarray*}
\mathbf{J}_{1}=
\left(
  \begin{array}{cc}
    \mathbf{I}_{N_{1}} & 0 \\     \\
    -\mathbf{A}_{0,21}\mathbf{A}_{0,11}^{-1} & \mathbf{I}_{2+p'} \\
  \end{array}
\right), \                          \quad
\mathbf{J}_{2}=
\left(
  \begin{array}{cc}
    \mathbf{I}_{N_{1}} & -\mathbf{A}_{0,11}^{-1}\mathbf{A}_{0,12} \\  \\
    0 & \mathbf{I}_{3+p'} \\
  \end{array}
\right),
\end{eqnarray*}
and $\mathbf{I}_{k}$ means the  identity matrix of order $k$.
Then we have
\begin{eqnarray*}
&&\mathbf{A}'=
\left(
  \begin{array}{cc}
    \mathbf{A}_{0,11} & 0 \\    \\
    0 & 0 \\
  \end{array}
\right)
+\mu
\left(
  \begin{array}{cc}
    \mathbf{A}_{1,11} & \mathbf{A}_{1,12}' \\
    &\\
    \mathbf{A}_{1,21}' &
    \mathbf{A}_{1,22}'
  \end{array}
\right)
+\sum_{k=2}^{i_{m}}\mu^{k}\mathbf{A}'_{k},
\end{eqnarray*}
where
\begin{eqnarray*}
\mathbf{A}_{1,12}'&=& \mathbf{A}_{1,12}-\mathbf{A}_{1,11}\mathbf{A}_{0,11}^{-1}\mathbf{A}_{0,12}, \\
 \mathbf{A}_{1,21}'&=&   \mathbf{A}_{1,21}-\mathbf{A}_{0,21}\mathbf{A}_{0,11}^{-1}\mathbf{A}_{1,11}, \\
 \mathbf{A}_{1,22}'&= &  \mathbf{A}_{1,22}-\mathbf{A}_{0,21}\mathbf{A}_{0,11}^{-1}\mathbf{A}_{1,12}
    -(\mathbf{A}_{1,21}-\mathbf{A}_{0,21}\mathbf{A}_{0,11}^{-1}\mathbf{A}_{1,11})\mathbf{A}_{0,11}^{-1}\mathbf{A}_{0,12},\\
\mathbf{A}_{k}'&=&\mathbf{J}_{1}\mathbf{A}_{k}\mathbf{J}_{2},\ \ k \geq 2.
\end{eqnarray*}
This process can be described as a series of the following explicit elementary transformations on $\mathbf{A}$:

(i). add row $m$ multiplied by a scalar $-\frac{a_{0,(N_{1}+m)j_{m}}}{a_{0,mj_{m}}}$ to row $N_{1}+m$, $m=1,2,...,2+p'$;

(ii). add column $k$ multiplied by a scalar $-\frac{a_{0,(m^{*}_{k})(N_{1}+k)}}{a_{0,(m^{*}_{k})k}}$ to column $N_{1}+k$, $1\leq k \leq 3+p'$,
where $m^{*}_{k}\in [1, N_{1}]$ is  an integer satisfying $j_{m^{*}_{k}}=k$.

Then we obtain the elements $a'_{1,(N_{1}+m)(N_{1}+j)}$ of $\mathbf{A}_{1,22}'$ as follows,
for $1\leq m\leq 2+p'$ and $1\leq j \leq 3+p'$, noticing that $j'_{m}\neq 2$,
\begin{eqnarray*}
a'_{1,(N_{1}+m)(N_{1}+j)}=
\begin{cases}
0, \ \ \ \ \ \qquad \ \qquad \qquad \qquad \qquad  \ \ \   \ \ j\neq j'_{m} \\                \\
\frac{N_{1}(p+q)(j'_{m}-2)}{(qN_{1}-pp')m^{*}_{j'_{m}}+d(j'_{m}-2)}C^{l_{N_{1}+m}}_{i_{N_{1}+m}}\neq 0,\ \ \  j=j'_{m}
\end{cases}.
\end{eqnarray*}

Denote by $\widetilde{\mathbf{A}}'$(resp. $\widetilde{\mathbf{A}}'_{k,22}$, $\widetilde{\mathbf{A}}'_{k,12}$)
the submatrix of $\mathbf{A}'$(resp. $\mathbf{A}'_{k,22}$, $\mathbf{A}'_{k,12}$)
by removing column $N_{1}+2$(resp. the second column).
Then $\widetilde{\mathbf{A}}'$ and $\widetilde{\mathbf{A}}'_{1,22}$ both are square matrices, and each row and each column of $\widetilde{\mathbf{A}}'_{1,22}$  both have exactly one nonzero element. Thus
\begin{eqnarray*}
\det(\widetilde{\mathbf{A}}'_{1,22})&=&
\pm \prod_{m=1}^{2+p'}a'_{1,(N_{1}+m)(N_{1}+j'_{m})} \neq 0,\\
\det(\widetilde{\mathbf{A}}')&=&
\mu^{2+p'} \det
\left(
  \begin{array}{ll}
    \mathbf{A}_{0,11}+\mu (\mathbf{A}_{1,11}+\cdots) &  0+\mu (\widetilde{\mathbf{A}}'_{1,12}+\cdots)\\
    \mathbf{A}_{1,21}'+\mu(\mathbf{A}_{2,21}'+\cdots) &  \widetilde{\mathbf{A}}'_{1,22}+\mu(\widetilde{\mathbf{A}}'_{2,22}+\cdots)
  \end{array}
\right)\\
&=&\mu^{2+p'} (\det(\mathbf{A}_{0,11})\det(\widetilde{\mathbf{A}}'_{1,22})+\mu h(\mu)),
\end{eqnarray*}
where $h(\mu)$ is a polynomial in $\mu$.
Noticing that $\det(\mathbf{A}_{0,11})\det(\widetilde{\mathbf{A}}'_{1,22})\neq 0$
and $\mu$ is a sufficiently small positive number,  we have $\det (\widetilde{\mathbf{A}}')\neq 0$,
which implies that $Rank (\mathbf{A})= Rank (\mathbf{A}')=Rank (\widetilde{\mathbf{A}}')=n+1$.

Next we consider the case that $pp'+2\leq 3+p'$ and $d\leq 2+p'$ hold simultaneously,
which contain only finite possibilities. In this case, the above process is still valid,
as long as we can find a suitable $g$.
The corresponding $g$ of each case
is listed in the following table(note that $(pp',d)=1$):
\\
\\
\begin{tabular}{|l|l|}
\hline
$p=p'=1,\  d=1$    &    $g=x^{n-5}y^{4}$    \\
\hline
$p=p'=1,\ d=2$    &    $g=x^{n+3-j_{1}}y^{j_{1}-4}+x^{n-j_{3}}y^{j_{3}-1}$   \\
\hline
$p=p'=1, \ d=3$    &    $g=x^{n+3-j_{1}}y^{j_{1}-4}+x^{n-j_{2}}y^{j_{2}-1}$  \\
\hline
$p=2,\ p'=1,\  d=1$    &    $g=x^{n-4}y^{3}+x^{n-8}y^{7}$     \\
\hline
$p=2,\ p'=1,\  d=3$    &    $g=x^{n-j_{1}-4}y^{j_{1}-3}+x^{n-j_{2}}y^{j_{2}-1}$     \\
\hline
else: $\mathbf{A}_{0,22}= 0$   &    $g=\sum_{m=1}^{1+p'}x^{n-1-\delta_{m}}y^{\delta_{m}}+x^{n-1-\delta_{2+p'}}y^{\delta_{2+p'}}$  \\
\hline
\end{tabular}
\\
\\
It is easy to check, no matter what kind of the cases in the table,
$\det (\widetilde{\mathbf{A}}')\neq 0$ always holds. The proof is finished.
\end{proof}

\subsection*{Acknowledgements}
G. Dong, C. Liu  and J. Yang are supported, resp., in part by the NSFC of China Grant 11701217, 11371269 and 11271026.

\end{document}